\newtheorem{lemme}{Lemma}
\newtheorem{theorem}{Theorem}
\newtheorem{remark}{Remark}
\begin{document}

\title [ Some generalizations and  refined Hardy type integral inequalities]{Some new  refined  Hardy type integral inequalities\\}%

\author[ Khaled Mehrez]{  Khaled Mehrez }
\address{Khaled Mehrez. D\'epartement de Math\'ematiques ISSATK, Kairouan 3000, Tunisia.}
 \email{k.mehrez@yahoo.fr}
 
\begin{abstract}
In this paper, by using Jensen's inequality and Chebyshev integral inequality,  some generalizations and  new refined  Hardy type integral inequalities are obtained.  In addition, the corresponding reverse relation are also proved.
\end{abstract}
\maketitle
{\it keywords:} Hardy integral inequality,  H\"older inequality, Chebyshev integral inequality, Jensen inequality.\\
\textbf{ Mathematics Subject Classification (2010)}\;26D15 \\
\section{Introduction} 
In 1920, Hardy \cite{ha} proved the following inequality. If $p>1, f\geq 0,\;p-$integrable on $(0,\infty)$ and 
\begin{equation}\label{1}
F(x)=\int_{0}^{x}f(t)dt,
\end{equation}
then 
\begin{equation}\label{rrr}
\int_0^{\infty}\left(\frac{F(x)}{x}\right)^{p}<\left(\frac{p}{p-1}\right)^{p}\int_{0}^{\infty}f^{p}(x)dx,
\end{equation}
unless $f\equiv0$. The constant $\left(\frac{p}{p-1}\right)^{p}$ is the best possible. Hardy's inequality plays an important role in analysis and applications.\\
The previous inequality still holds for parameters a and b. That is, the inequality
$$\int_a^{b}\left(\frac{F(x)}{x}\right)^{p}<\left(\frac{p}{p-1}\right)^{p}\int_{a}^{b}f^{p}(x)dx,$$
is valid for $0<a<b<\infty,$ see \cite{lg}.\\

In \cite{ws}, the author gives a generalization and improvement for Hardy's inequality in the sens when $f$ is non-decreasing. If $f\geq0,$ and non-decreasing, $F$ is as defined by (\ref{1}). $f\geq0,\, g>0,\,\frac{x}{g(x)}$ is non-increasing,~$p>1,\,0<a<1$  then 
\begin{equation}
\int_{0}^{\infty}\left(\frac{F(x)}{g(x)}\right)^{p}dx\leq\frac{1}{a(1-p)(1-a)^{p-1}}\int_{0}^{\infty}\left(\frac{xf(x)}{g(x)}\right)^{p}dx.
\end{equation}

 

In this paper,  by using Jensen's inequality and Chebyshev integral inequality, we first give a generalization of Hardy's inequality (\ref{rrr}). Second, we prove some new refined Hardy type integral inequalities.

\section{Some preliminary lemmas}
In this section, we state the following lemmas, which are useful in the proofs of our results.
\begin{lemme}\label{009}\cite{jj}(Jensen inequality)
Let $\mu$ be a probability measure and let $\varphi\geq0$ be a convex function. Then, for all $f$ be a integrable function we have 
$$\int \varphi\circ f d\mu\geq \varphi\left(\int f d\mu\right).$$
\end{lemme}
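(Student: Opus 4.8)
The plan is to prove Jensen's inequality through the \emph{supporting line} (subgradient) characterization of convex functions. First I would set $m=\int f\,d\mu$, the $\mu$-mean of $f$; this is a finite real number because $f$ is integrable and $\mu$ is a probability measure of total mass $1$, and $m$ lies in the convex hull of the range of $f$, hence in the domain of $\varphi$. This value $m$ is the point at which the inequality will be anchored.

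The key step is to construct an affine minorant of $\varphi$ that touches the graph at $m$. Since $\varphi$ is convex it possesses finite left and right derivatives at every interior point of its domain, and any slope $c$ lying between them yields
$$\varphi(t)\geq \varphi(m)+c\,(t-m)\qquad\text{for every }t\text{ in the domain of }\varphi.$$
This supporting-line inequality is the only place where convexity is used in an essential way, and producing the slope $c$ (equivalently, verifying that $m$ is an interior point, or handling the boundary case separately) is the main technical obstacle; it is nonetheless a standard property of convex functions.

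I would then substitute $t=f(x)$ to obtain the pointwise bound $\varphi\bigl(f(x)\bigr)\geq \varphi(m)+c\bigl(f(x)-m\bigr)$, and integrate both sides against $\mu$. Using $\int d\mu=1$ and $\int f\,d\mu=m$, the linear contribution $c\bigl(\int f\,d\mu-m\bigr)$ collapses to zero, leaving
$$\int \varphi\circ f\,d\mu\geq \varphi(m)=\varphi\!\left(\int f\,d\mu\right),$$
which is exactly the assertion of the lemma. One preliminary point deserves a quick check: $\varphi\circ f$ is measurable because a convex $\varphi$ is continuous on the interior of its domain, and the left-hand integral is well defined since $\varphi\geq 0$ forces the integrand to be nonnegative.
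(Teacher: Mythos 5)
Your proof is correct, but there is nothing in the paper to compare it against: the paper does not prove this lemma at all, it simply quotes Jensen's inequality with a citation to Jensen's 1906 paper and uses it as a black box in the proofs of Theorems \ref{Khaled} and \ref{tt2}. What you have written is the standard supporting-line argument: take $m=\int f\,d\mu$, pick a subgradient slope $c$ of $\varphi$ at $m$, use the affine minorant $\varphi(t)\geq\varphi(m)+c\,(t-m)$ pointwise along $f$, and integrate so that the linear term vanishes because $\mu$ has total mass $1$. That is a complete and essentially optimal proof of the statement as given. The only technical loose ends are the ones you already flag: if $m$ sits on the boundary of the domain of $\varphi$ the supporting line may fail to exist, but in that case $f=m$ $\mu$-almost everywhere and the inequality holds with equality; and the hypothesis $\varphi\geq0$ is exactly what guarantees that $\int\varphi\circ f\,d\mu$ is well defined (possibly $+\infty$), which you correctly invoke. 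So the proposal is sound; it supplies a proof where the paper deliberately supplies none.
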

\begin{lemme}\cite{mi}\label{mi}(Chebyshev integral inequality)
If $f,g:[a,b]\longrightarrow\mathbb{R}$ are integrable functions, both increasing or both decreasing, and $p:[a,b]\longrightarrow\mathbb{R}$ is a positive integrable function, then
\begin{equation}\label{w1}
\int_{a}^{b}p(t)f(t)dt\int_{a}^{b}p(t)g(t)dt\leq\int_{a}^{b}p(t)dt\int_{a}^{b}p(t)f(t)g(t)dt
\end{equation}
Note that if one of the functions $f$ or $g$ is decreasing and the other is increasing, then (\ref{w1}) is reversed.
\end{lemme}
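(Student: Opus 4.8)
The plan is to prove the inequality by the classical symmetrization technique, introducing an auxiliary double integral whose sign is dictated entirely by the monotonicity hypothesis. Specifically, I would consider the quantity
$$I=\int_{a}^{b}\int_{a}^{b}p(s)\,p(t)\,\bigl(f(t)-f(s)\bigr)\bigl(g(t)-g(s)\bigr)\,ds\,dt.$$
The whole argument hinges on a single observation: when $f$ and $g$ are \emph{both} increasing (or both decreasing), the factors $f(t)-f(s)$ and $g(t)-g(s)$ always share the same sign, so their product is nonnegative for every pair $(s,t)\in[a,b]^{2}$. Combined with $p(s)p(t)>0$, this forces $I\geq 0$. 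Since $p$, $f$, $g$ are integrable and the domain is the bounded square, Fubini's theorem applies and all the manipulations below are legitimate.

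The next step is purely algebraic: expand the integrand as
$$\bigl(f(t)-f(s)\bigr)\bigl(g(t)-g(s)\bigr)=f(t)g(t)-f(t)g(s)-f(s)g(t)+f(s)g(s),$$
and integrate term by term, exploiting that both the domain $[a,b]^{2}$ and the weight $p(s)p(t)$ are symmetric under the swap $s\leftrightarrow t$. The first and fourth terms each contribute $\bigl(\int_{a}^{b}p\bigr)\bigl(\int_{a}^{b}pfg\bigr)$, while the second and third each contribute $-\bigl(\int_{a}^{b}pf\bigr)\bigl(\int_{a}^{b}pg\bigr)$. Collecting them yields the identity
$$I=2\int_{a}^{b}p(t)\,dt\int_{a}^{b}p(t)f(t)g(t)\,dt-2\int_{a}^{b}p(t)f(t)\,dt\int_{a}^{b}p(t)g(t)\,dt.$$
Feeding the inequality $I\geq 0$ into this identity delivers (\ref{w1}) at once.

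For the reversed statement, I would note that if one of $f,g$ is increasing and the other decreasing, then $f(t)-f(s)$ and $g(t)-g(s)$ carry opposite signs, so the integrand defining $I$ is now nonpositive and $I\leq 0$; the same identity then reverses (\ref{w1}). I do not anticipate a genuine obstacle, as every step is elementary. The one point demanding care is precisely the sign analysis of the product $\bigl(f(t)-f(s)\bigr)\bigl(g(t)-g(s)\bigr)$, since that is exactly where the monotonicity of $f,g$ and the matching (or opposite) direction of their variation enter the proof; the remainder is just the symmetric bookkeeping of the four expanded terms.
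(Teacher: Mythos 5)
Your proof is correct: the symmetrization identity, the sign analysis of $\bigl(f(t)-f(s)\bigr)\bigl(g(t)-g(s)\bigr)$, and the Fubini justification (note that monotone functions on $[a,b]$ are automatically bounded, so all the products involved are integrable) are all in order, and the reversed case follows exactly as you say. For comparison, the paper itself gives no proof of this lemma --- it is imported as a known result with a citation to Mitrinovi\'c's \emph{Analytic Inequalities} --- so there is no in-paper argument to measure yours against; your symmetrization argument is precisely the classical proof one finds in that reference, so you have in effect supplied the standard proof that the paper delegates to the literature.
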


A function $\varphi$ is called submultiplicative, if $\varphi(xy)\leq\varphi(x)\varphi(y),$ for all $x,y>0$ In particular, for all $n\geq 1$, we have $\varphi(x^n)\leq\varphi^{n}(x)\;x>0.$

The next lemma exist in \cite{ws}.
\begin{lemme}\label{l3} 
Let $\varphi\geq0$ is submultiplicative, and $\varphi(0)=0.$ If $\varphi^{'}(x)$ is non-decreasing (non-increasing), then $\frac{\varphi(x)}{x}$ is non-decreasing (non-increasing).
\end{lemme}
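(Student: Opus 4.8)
The plan is to reduce the monotonicity of $\frac{\varphi(x)}{x}$ to the sign of a single auxiliary quantity obtained by differentiation, and then to pin down that sign using the normalization $\varphi(0)=0$ together with the monotonicity of $\varphi'$. First I would apply the quotient rule to compute, for $x>0$,
$$\frac{d}{dx}\left(\frac{\varphi(x)}{x}\right)=\frac{x\varphi'(x)-\varphi(x)}{x^{2}},$$
so that the claim is equivalent to showing that the numerator $g(x):=x\varphi'(x)-\varphi(x)$ is nonnegative when $\varphi'$ is non-decreasing, and nonpositive when $\varphi'$ is non-increasing.

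The key step is to rewrite $\varphi(x)$ as an average of $\varphi'$ over $[0,x]$. Since $\varphi(0)=0$, the fundamental theorem of calculus gives $\varphi(x)=\int_{0}^{x}\varphi'(t)\,dt$. If $\varphi'$ is non-decreasing then $\varphi'(t)\leq\varphi'(x)$ for every $t\in[0,x]$, and integrating this pointwise bound yields
$$\varphi(x)=\int_{0}^{x}\varphi'(t)\,dt\leq\int_{0}^{x}\varphi'(x)\,dt=x\varphi'(x),$$
so $g(x)\geq0$ and $\frac{\varphi(x)}{x}$ is non-decreasing. The non-increasing case is perfectly symmetric: from $\varphi'(t)\geq\varphi'(x)$ on $[0,x]$ one obtains $\varphi(x)\geq x\varphi'(x)$, hence $g(x)\leq0$ and $\frac{\varphi(x)}{x}$ is non-increasing.

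This argument is short and presents no genuine obstacle; the only point requiring care is the implicit regularity. The statement presupposes that $\varphi$ is differentiable on $(0,\infty)$ with derivative $\varphi'$, and the integral representation needs $\varphi'$ to be integrable on $[0,x]$, which is automatic because a monotone function is Riemann integrable. I note in passing that the hypotheses of submultiplicativity and nonnegativity of $\varphi$ are not actually used here: the conclusion rests solely on the normalization $\varphi(0)=0$ and the monotonicity of $\varphi'$, the submultiplicativity being relevant only for the later applications of the lemma.
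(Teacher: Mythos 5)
Your proof is correct, and the first thing to note is that the paper itself gives no proof of this lemma: it is imported from \cite{ws} with only the remark that it ``exists'' there. The closest in-paper argument is the proof of Lemma~\ref{lll}, which handles the analogous statement for $G(x)/x^{2}$ by a purely differential device: write the derivative of the quotient as $K(x)/x^{3}$, differentiate the numerator once more, read off the sign of $K'$ from the hypothesis, and conclude from $K(0)=0$ that $K\geq 0$. Transposed to the present lemma, that device would set $h(x)=x\varphi'(x)-\varphi(x)$ and use $h'(x)=x\varphi''(x)\geq 0$, which requires a second derivative (consistent with the paper assuming $\varphi$ twice differentiable when it invokes the lemma in Theorem~\ref{tt2}). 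You instead integrate the hypothesis: $\varphi(x)=\int_{0}^{x}\varphi'(t)\,dt$ together with the pointwise bound $\varphi'(t)\leq\varphi'(x)$ yields $x\varphi'(x)-\varphi(x)\geq 0$ directly. That is a genuinely different route and marginally more general, since it needs $\varphi'$ only to exist and be monotone, not to be differentiable itself; the price is the integrability discussion you flag, which can be removed entirely by using the mean value theorem instead of the fundamental theorem of calculus: $\varphi(x)=\varphi(x)-\varphi(0)=x\varphi'(\xi)$ for some $\xi\in(0,x)$, and monotonicity of $\varphi'$ compares $\varphi'(\xi)$ with $\varphi'(x)$, covering in particular the case where $\varphi'$ is non-increasing and unbounded near $0$ (e.g.\ $\varphi(x)=\sqrt{x}$). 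Your closing observation is also accurate: neither submultiplicativity nor $\varphi\geq 0$ is used anywhere; the conclusion rests solely on $\varphi(0)=0$ and the monotonicity of $\varphi'$, the extra hypotheses being relevant only to the applications of the lemma later in the paper.
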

\begin{lemme}\label{lll}
Let $g>0,$ and $G$ is as defined by (\ref{001}). If the function $\frac{g(x)}{x}$
is non-decreasing (non-increasing) on $(0,\infty)$, then the function $\frac{G(x)}{x^{2}}$
is also non-decreasing (non-increasing) on $(0,\infty)$ .
\end{lemme}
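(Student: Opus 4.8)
The plan is to treat $G$ as the primitive $G(x)=\int_0^x g(t)\,dt$ introduced in (\ref{001}), and to reduce the claimed monotonicity of $G(x)/x^2$ to the sign of a single auxiliary quantity. Writing $h(t):=g(t)/t$, the hypothesis is precisely that $h$ is non-decreasing (resp.\ non-increasing) on $(0,\infty)$. Since $G$ is locally absolutely continuous with $G'(x)=g(x)$, I would differentiate and observe
\[
\frac{d}{dx}\!\left(\frac{G(x)}{x^{2}}\right)=\frac{x\,g(x)-2\,G(x)}{x^{3}},
\]
so that for $x>0$ the sign of the derivative is governed entirely by the numerator $N(x):=x\,g(x)-2\,G(x)$. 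The whole lemma thus comes down to showing $N(x)\ge 0$ in the non-decreasing case and $N(x)\le 0$ in the non-increasing case.

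The key step is then to substitute $g(t)=t\,h(t)$ and estimate $G$ by the endpoint value of $h$. In the non-decreasing case one has $h(t)\le h(x)$ for every $t\in(0,x]$, hence
\[
2\,G(x)=2\int_0^x t\,h(t)\,dt\le 2\,h(x)\int_0^x t\,dt=x^{2}h(x)=x\,g(x),
\]
which is exactly $N(x)\ge 0$. The non-increasing case is perfectly symmetric: there $h(t)\ge h(x)$ on $(0,x]$, every inequality above reverses, and one obtains $N(x)\le 0$. In each case the derivative keeps a constant sign on $(0,\infty)$, and the desired monotonicity of $G(x)/x^2$ follows at once.

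\textbf{Main obstacle and remarks.} The computation itself is very short; the only genuine point requiring care is the regularity, since $g$ is not assumed continuous. I would therefore not differentiate $g$ itself: it suffices that $G(x)/x^2$ be locally absolutely continuous on $(0,\infty)$ and that its almost-everywhere derivative $N(x)/x^{3}$ keep one sign, from which monotonicity follows. One may also bypass differentiation entirely, because the inequality $2\int_0^x t\,h(t)\,dt\le x^{2}h(x)$ (resp.\ $\ge$) is nothing but the statement that the $t$-weighted average of $h$ over $[0,x]$ does not exceed (resp.\ is not less than) its endpoint value $h(x)$, which is immediate from the monotonicity of $h$. Presented this way the lemma reduces to a single elementary comparison, and it is precisely this inequality $x\,g(x)\gtrless 2\,G(x)$ that will later feed into the refined Hardy-type estimates built from Jensen's and Chebyshev's inequalities.
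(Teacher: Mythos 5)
Your proof is correct, and while it opens exactly as the paper does---differentiating $G(x)/x^{2}$ and reducing everything to the sign of $N(x)=xg(x)-2G(x)$---you settle the key step by a genuinely different route. The paper differentiates once more: it sets $K(x)=xg(x)-2G(x)$, computes $K'(x)=xg'(x)-g(x)=x^{2}\left(\frac{g(x)}{x}\right)'\geq 0$, and concludes $K\geq 0$ from $K(0)=0$. This implicitly assumes $g$ is differentiable (and that $K$ extends to $0$ with $K(0)=0$), neither of which is granted by the hypotheses of the lemma. You instead bound $N(x)$ directly by the integral comparison
\begin{equation*}
2G(x)=2\int_{0}^{x}t\,h(t)\,dt\leq 2h(x)\int_{0}^{x}t\,dt=x^{2}h(x)=xg(x),\qquad h(t)=\frac{g(t)}{t},
\end{equation*}
which uses only the assumed monotonicity of $g(x)/x$. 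What your version buys is robustness: it works for any positive $g$ with $g(x)/x$ monotone (no differentiability, no boundary condition at $0$), and your closing remark that the derivative-free inequality is just a weighted-average comparison is the cleanest way to see why the lemma is true; what the paper's version buys is brevity, at the cost of unstated smoothness hypotheses. Your attention to regularity (working with absolute continuity of $G$ and an a.e.\ derivative of one sign) closes a gap that the paper's own proof leaves open.
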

\begin{proof} For all $x>0,$ and suppose that the function $\frac{g(x)}{x}$ is non-decreasing 
\begin{equation*}
\left(\frac{G(x)}{x^{2}}\right)^{'}=\frac{xg(x)-2G(x)}{x^{3}}=\frac{K(x)}{x^3},
\end{equation*}
and
$$K^{'}(x)=xg^{'}(x)-g(x)=x^{2}\left(\frac{g(x)}{x}\right)^{'}\geq0,$$
and consequently, the function $K$ is non-decreasing. Since $K(0)=0,$ then $K(x)\geq0,$ which implies $\left(\frac{G(x)}{x^{2}}\right)^{'}\geq0$. Therefore $\frac{G(x)}{x^{2}}$ is non-decreasing. 
\end{proof}
\section{Main results}
The following results gives a  generalization of Hardy's inequality.
\begin{theorem}\label{t1}
Let $f\geq0,\, g>0,\,\,0<a<1,\, p>1,\:q>\frac{p-a(p-1)}{2}$ and 
\begin{equation}\label{001}
G(x)=\int_{0}^{x}g(t)dt.
\end{equation}
If the function $\frac{x}{g(x)}$ is non-increasing function. Then
the following inequality 
\begin{equation}
\int_{0}^{\infty}\frac{F^{p}(x)}{G^{q}(x)}dx\leq\frac{1}{((a-1)(p-1)+2q-1)(1-a)^{p-1}}\int_{0}^{\infty}\frac{(tf(t))^{p}}{G^{q}(t)}dt,
\end{equation}
is valid. In particular, if we put $a=\frac{1}{p},q=\frac{p}{2}$ and $G(x)=x^2$ we obtain (\ref{rrr}).
\end{theorem}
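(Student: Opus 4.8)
The plan is to reduce $F(x)^p$ to a single integral by Jensen's inequality (Lemma \ref{009}) applied with a carefully chosen weight, then integrate against $G^{-q}(x)\,dx$, exchange the order of integration, and finally control the resulting inner integral using the monotonicity of $G(x)/x^2$ provided by Lemma \ref{lll}. The guiding choice is the weight $w(t)=t^{-a}$, which is dictated by the exponent $a$ appearing in the final constant.

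First I would set $W(x)=\int_0^x t^{-a}\,dt=\frac{x^{1-a}}{1-a}$, which is finite because $0<a<1$, and write $\frac{F(x)}{W(x)}=\int_0^x f(t)t^{a}\,\frac{t^{-a}\,dt}{W(x)}$, so that $d\mu=\frac{t^{-a}}{W(x)}\,dt$ is a probability measure on $[0,x]$. Applying Jensen's inequality to the convex function $\varphi(u)=u^{p}$ (legitimate since $p>1$) to the function $t\mapsto f(t)t^{a}$ gives
$$F(x)^{p}\leq W(x)^{p-1}\int_0^x t^{a(p-1)}f(t)^{p}\,dt.$$
Dividing by $G^{q}(x)$, integrating over $(0,\infty)$, and invoking Tonelli's theorem (all integrands are nonnegative) to swap the order of integration, I would obtain
$$\int_0^{\infty}\frac{F^{p}(x)}{G^{q}(x)}\,dx\leq\frac{1}{(1-a)^{p-1}}\int_0^{\infty}t^{a(p-1)}f(t)^{p}\left(\int_t^{\infty}\frac{x^{(1-a)(p-1)}}{G^{q}(x)}\,dx\right)dt.$$

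The crux is estimating the inner integral, and this is where Lemma \ref{lll} enters: since $\frac{x}{g(x)}$ is non-increasing, $\frac{g(x)}{x}$ is non-decreasing, whence $\frac{G(x)}{x^{2}}$ is non-decreasing; therefore for $x\geq t$ one has $G(x)\geq\frac{G(t)}{t^{2}}x^{2}$, i.e. $G^{-q}(x)\leq t^{2q}G^{-q}(t)x^{-2q}$ (using $q>0$). This reduces the inner integral to the elementary power integral $\int_t^{\infty}x^{(1-a)(p-1)-2q}\,dx$. The main obstacle, and the point at which the hypothesis on $q$ is forced, is the convergence of this integral: it converges exactly when $(1-a)(p-1)-2q<-1$, which is precisely $q>\frac{p-a(p-1)}{2}$, and its value is $\frac{t^{(1-a)(p-1)-2q+1}}{(a-1)(p-1)+2q-1}$ with a positive denominator.

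Substituting back, I would check that the powers of $t$ combine as $a(p-1)+(1-a)(p-1)+1=p$ and the factor $t^{2q}/G^{q}(t)$ reconstitutes $G^{-q}(t)$, so that the constants collect into $\frac{1}{((a-1)(p-1)+2q-1)(1-a)^{p-1}}$ and the right-hand side becomes $\int_0^{\infty}\frac{(tf(t))^{p}}{G^{q}(t)}\,dt$, giving the claimed inequality. For the special case I would set $a=\frac{1}{p}$, $q=\frac{p}{2}$, $G(x)=x^{2}$ and verify that the constant collapses to $\left(\frac{p}{p-1}\right)^{p}$ while the two integrals reduce to $\int_0^{\infty}\left(\frac{F(x)}{x}\right)^{p}dx$ and $\int_0^{\infty}f^{p}(x)\,dx$, thereby recovering (\ref{rrr}). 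I do not expect to need the Chebyshev inequality (Lemma \ref{mi}) for this particular generalization; Jensen together with Lemma \ref{lll} and Tonelli suffices.
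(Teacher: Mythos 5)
Your proposal is correct and follows essentially the same route as the paper: the same weight $t^{-a}$, the same intermediate bound $F^{p}(x)\leq\left(\int_{0}^{x}t^{-a}dt\right)^{p-1}\int_{0}^{x}t^{a(p-1)}f^{p}(t)\,dt$, the same Fubini/Tonelli swap, the same use of the monotonicity of $G(x)/x^{2}$ from Lemma \ref{lll}, and the same convergence condition and bookkeeping of exponents. The only cosmetic difference is that you derive the opening bound via Jensen's inequality with the probability measure $\frac{t^{-a}\,dt}{\int_{0}^{x}s^{-a}ds}$, whereas the paper uses H\"older's inequality with the split $t^{-a(p-1)/p}\cdot t^{a(p-1)/p}f(t)$; these two steps are equivalent and yield identical estimates.
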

\begin{proof} From Lemma \ref{lll}, we obtain that the function $\frac{x^{2}}{G(x)}$ is non-increasing. By using the H\"older inequality we get  
\begin{equation}
\begin{split}
\int_{0}^{\infty}\frac{F^{p}(x)}{G^{q}(x)}dx&=\int_{0}^{\infty}G^{-q}(x)\left(\int_{0}^{x}t^{\frac{-a(p-1)}{p}}t^{\frac{a(p-1)}{p}}f(t)dt\right)^{p}dx\\
&\leq\int_{0}^{\infty}G^{-q}(x)\left[\left(\int_{0}^{x}t^{-a}dt\right)^{\frac{p-1}{p}}\left(\int_{0}^{x}t^{a(p-1)}f^{p}(t)dt\right)^{\frac{1}{p}}\right]^{p}dx\\
&=\int_{0}^{\infty}G^{-q}(x)\left(\int_{0}^{x}t^{-a}dt\right)^{p-1}\int_{0}^{x}t^{a(p-1)}f^{p}(t)dtdx\\
&=\frac{1}{(1-a)^{p-1}}\int_{0}^{\infty}t^{a(p-1)}f^{p}(t)\int_{t}^{\infty}x^{(1-a)(p-1)}G^{-q}(x)dxdt\\
&\leq\frac{1}{(1-a)^{p-1}}\int_{0}^{\infty}t^{a(p-1)}f^{p}(t)\left(\frac{t^{2}}{G(t)}\right)^{q}\int_{t}^{\infty}x^{(1-a)(p-1)-2q}dxdt\\
&=\frac{1}{((a-1)(p-1)+2q-1)(1-a)^{p-1}}\int_{0}^{\infty}\frac{(tf(t))^{p}}{G^{q}(t)}dt.
\end{split}
\end{equation}
\end{proof}
The following result concerns the converse inequality.
\begin{theorem}\label{t2} Let $f\geq0,\, g>0,\,\,0<p<1,\, a>0,\:$ and $q>\frac{p+a(p-1)}{2}$.
If the function $\frac{x}{g(x)}$ is non-increasing function. Then
the following inequality 
\begin{equation}
\int_{0}^{\infty}\frac{F^{p}(x)}{G^{q}(x)}dx\geq \frac{1}{((a+1)(p-1)+2q-1)(1+a)^{p-1}}\int_{0}^{\infty}\frac{(tf(t))^{p}}{G^{q}(t)}dt.
\end{equation}
is valid. In particular, if we put $a=\frac{1}{p},\;0<p<1,\;q=\frac{p}{2}\;\textrm{and}\; G(x)=x^2$, we obtain \cite{ws}
\begin{equation}
\int_{0}^{\infty}\left(\frac{F(x)}{x}\right)^{p}dx\geq\frac{1+p}{1-p}\left(\frac{p}{1+p}\right)^{p}\int_{0}^{\infty}\left(\frac{f(x)}{x}\right)^{p}dx.
\end{equation}
\end{theorem}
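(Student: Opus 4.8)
```latex
The plan is to mirror the proof of Theorem \ref{t1}, but with the direction of every inequality reversed, exploiting the fact that for $0<p<1$ the H\"older inequality reverses. First I would write the starting expression in the same split form,
\[
\int_{0}^{\infty}\frac{F^{p}(x)}{G^{q}(x)}dx=\int_{0}^{\infty}G^{-q}(x)\left(\int_{0}^{x}t^{\frac{a(p-1)}{p}}t^{\frac{-a(p-1)}{p}}f(t)dt\right)^{p}dx.
\]
The key structural point is that for the exponent range $0<p<1$ the conjugate exponent $p'=\frac{p}{p-1}$ is negative, so H\"older's inequality holds with the inequality sign flipped; applying it to the factors $t^{-a(p-1)/p}$ and $t^{a(p-1)/p}f(t)$ yields a lower bound rather than an upper bound. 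This is the crucial sign change that turns the whole chain into a $\ge$ statement.

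Next I would carry out the same inner integration. Here the sign of $a$ differs from Theorem \ref{t1}: with $a>0$ the integral $\int_{0}^{x}t^{a}dt=\frac{x^{a+1}}{a+1}$ produces the constant $(1+a)^{-(p-1)}$ and raises the exponent of $t$ inside the remaining integral. After swapping the order of integration by Fubini, the inner integral becomes $\int_{0}^{x}t^{-a(p-1)}f^{p}(t)dt$, and the outer integral over $x$ runs from $t$ to $\infty$, exactly as in Theorem \ref{t1} but with $a$ replaced by $-a$ in the relevant exponents.

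The step that requires care, and which I expect to be the main obstacle, is controlling the factor $x^{(1+a)(p-1)}G^{-q}(x)$ in the reversed direction. In Theorem \ref{t1} one uses that $\frac{x^{2}}{G(x)}$ is non-increasing (via Lemma \ref{lll}) to bound $G^{-q}(x)\le\left(\frac{t^{2}}{G(t)}\right)^{q}x^{-2q}$ for $x\ge t$. For the converse I must instead bound $G^{-q}(x)$ from below on the range $x\ge t$; since $\frac{x^{2}}{G(x)}$ is non-increasing we have $\frac{x^{2}}{G(x)}\le\frac{t^{2}}{G(t)}$ for $x\ge t$, which gives $G^{-q}(x)\ge\left(\frac{t^{2}}{G(t)}\right)^{q}x^{-2q}$, the inequality pointing the correct way. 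One then evaluates the resulting power integral $\int_{t}^{\infty}x^{(1+a)(p-1)-2q}dx$, which converges precisely under the hypothesis $q>\frac{p+a(p-1)}{2}$ and contributes the factor $\frac{1}{(a+1)(p-1)+2q-1}$.

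Finally I would assemble these three lower bounds into the single chain
\[
\int_{0}^{\infty}\frac{F^{p}(x)}{G^{q}(x)}dx\ge\frac{1}{((a+1)(p-1)+2q-1)(1+a)^{p-1}}\int_{0}^{\infty}\frac{(tf(t))^{p}}{G^{q}(t)}dt,
\]
and then verify the particular case by substituting $a=\frac{1}{p}$, $q=\frac{p}{2}$, $G(x)=x^{2}$, checking that the constant collapses to $\frac{1+p}{1-p}\left(\frac{p}{1+p}\right)^{p}$. The only genuinely delicate points are justifying the reversed H\"older step for $0<p<1$ and confirming that each monotonicity bound points in the direction that accumulates into an overall lower bound rather than cancelling.
```
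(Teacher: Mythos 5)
Your proposal follows exactly the paper's route (reverse H\"older for $0<p<1$, Fubini, a pointwise monotonicity bound on $G^{-q}(x)$, then the power integral), but the step you single out as ``pointing the correct way'' is precisely where the argument breaks, and it is a genuine error, not a notational one. From $\frac{x^{2}}{G(x)}\le\frac{t^{2}}{G(t)}$ for $x\ge t$ (which is what the stated hypothesis ``$\frac{x}{g(x)}$ non-increasing'' yields via Lemma \ref{lll}) and $q>0$, one gets
\[
G^{-q}(x)=\left(\frac{x^{2}}{G(x)}\right)^{q}x^{-2q}\le\left(\frac{t^{2}}{G(t)}\right)^{q}x^{-2q},
\]
an \emph{upper} bound: raising the smaller positive quantity to a positive power and multiplying by $x^{-2q}>0$ cannot reverse the inequality. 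So under the hypothesis as you (and the theorem statement) take it, the estimate runs in the same direction as in Theorem \ref{t1}, your chain mixes $\ge$ with $\le$, and the lower bound cannot be closed. What the argument actually requires is that $\frac{x^{2}}{G(x)}$ be non-\emph{decreasing}, i.e.\ $\frac{x}{g(x)}$ non-decreasing (equivalently $\frac{g(x)}{x}$ non-increasing) --- the opposite monotonicity. The paper's own proof tacitly concedes this: it asserts that $\frac{x^{2}}{G(x)}$ is non-decreasing (incompatible with its stated hypothesis, which is evidently a typo carried over from Theorem \ref{t1}); with that corrected hypothesis the bound $G^{-q}(x)\ge\left(\frac{t^{2}}{G(t)}\right)^{q}x^{-2q}$ is legitimate and the chain closes. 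Your write-up instead keeps the wrong hypothesis and papers over the mismatch with a false implication.

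A second, smaller defect concerns the constant. Evaluating $\int_{t}^{\infty}x^{(1+a)(p-1)-2q}dx=\frac{t^{(1+a)(p-1)-2q+1}}{2q-1-(1+a)(p-1)}$ shows that the factor produced is $\frac{1}{(a+1)(1-p)+2q-1}$ --- with $(1-p)$, not $(p-1)$ --- which is what the last line of the paper's proof has. You copied $(a+1)(p-1)+2q-1$ from the theorem statement; had you actually carried out the verification of the particular case $a=\frac{1}{p}$, $q=\frac{p}{2}$, $G(x)=x^{2}$ that you defer to the end, you would have caught this, since the statement's constant then equals $(p-1)\frac{1+2p}{p}<0$ for $0<p<1$ (absurd for a lower bound between nonnegative integrals), whereas the $(1-p)$ version equals $\frac{1-p}{p}$ and collapses correctly to $\frac{1+p}{1-p}\left(\frac{p}{1+p}\right)^{p}$.
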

\begin{proof}
By using Lemma \ref{lll}, we obtain that the function $\frac{x^{2}}{G(x)}$ is non-decreasing. From the reverse H\"older inequality we get  
\begin{equation}
\begin{split}
\int_{0}^{\infty}\frac{F^{p}(x)}{G^{q}(x)}dx&=\int_{0}^{\infty}G^{-q}(x)\left(\int_{0}^{x}t^{\frac{a(p-1)}{p}}t^{\frac{-a(p-1)}{p}}f(t)dt\right)^{p}dx\\
&\geq\int_{0}^{\infty}G^{-q}(x)\left[\left(\int_{0}^{x}t^{a}dt\right)^{\frac{p-1}{p}}\left(\int_{0}^{x}t^{-a(p-1)}f^{p}(t)dt\right)^{\frac{1}{p}}\right]^{p}dx\\
&=\int_{0}^{\infty}G^{-q}(x)\left(\int_{0}^{x}t^{a}dt\right)^{p-1}\int_{0}^{x}t^{-a(p-1)}f^{p}(t)dtdx\\
&=\frac{1}{(1+a)^{p-1}}\int_{0}^{\infty}t^{-a(p-1)}f^{p}(t)\int_{t}^{\infty}x^{(1+a)(p-1)}G^{-q}(x)dxdt\\
&\leq\frac{1}{(1+a)^{p-1}}\int_{0}^{\infty}t^{-a(p-1)}f^{p}(t)\left(\frac{t^{2}}{G(t)}\right)^{q}\int_{t}^{\infty}x^{(1+a)(p-1)-2q}dxdt\\
&=\frac{1}{((a+1)(1-p)+2q-1)(1+a)^{p-1}}\int_{0}^{\infty}\frac{(tf(t))^{p}}{G^{q}(t)}dt.
\end{split}
\end{equation}
\end{proof}
\begin{remark} Theorem \ref{t1} and Theorem \ref{t2} are an answer to an open problem proposed by B. Sroysang \cite{bs}.
\end{remark}

The other type is given by the following.
\begin{theorem}\label{Khaled} Let $f,g>0$ and non-decreasing on $(0,\infty)$. $F$ be as defined
by (\ref{1}) and $G$ defined by (\ref{001}). Let $\varphi,\psi$ are non-decreasing
,submultiplicative and convex. If $\varphi(f(x)\psi(g(x))$ is integrable, then the following inequality
\[
\int_{0}^{\infty}\varphi(F(x))\psi(G(x))x^{1-p}dx\leq\frac{1}{p-1}\int_{0}^{\infty}\varphi(f(x))\psi(g(x))x^{1-p}dx.\]
holds for all $p>1.$
\end{theorem}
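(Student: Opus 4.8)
The plan is to reduce the inequality to a single pointwise estimate on the integrand $\varphi(F(x))\psi(G(x))$, valid for each fixed $x>0$, and then integrate against the weight $x^{1-p}$ and exchange the order of integration, the constant $\frac{1}{p-1}$ arising from $\int_t^\infty x^{-p}\,dx=\frac{t^{1-p}}{p-1}$ (this is where $p>1$ is used decisively). The three structural hypotheses map naturally onto three tools: convexity feeds Jensen's inequality (Lemma \ref{009}), submultiplicativity lets me strip the normalising factor that Jensen produces, and the common monotonicity of $f,g,\varphi,\psi$ feeds Chebyshev's inequality (Lemma \ref{mi}).

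First I would rewrite $\frac{F(x)}{x}=\int_0^x f(t)\,\frac{dt}{x}$ and regard $\frac{dt}{x}$ as a probability measure on $(0,x)$. Jensen's inequality (Lemma \ref{009}) with the convex function $\varphi$ then gives $\varphi\!\left(\frac{F(x)}{x}\right)\le \frac{1}{x}\int_0^x\varphi(f(t))\,dt$, and likewise $\psi\!\left(\frac{G(x)}{x}\right)\le\frac{1}{x}\int_0^x\psi(g(t))\,dt$. To recover $\varphi(F(x))$ itself I would invoke submultiplicativity in the form $\varphi(F(x))=\varphi\!\left(x\cdot\frac{F(x)}{x}\right)\le\varphi(x)\,\varphi\!\left(\frac{F(x)}{x}\right)$, and symmetrically for $\psi$; here the assumption $\varphi(0)=0$ together with Lemma \ref{l3}, which guarantees that $\frac{\varphi(x)}{x}$ is monotone, is what makes this normalising factor manageable. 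At this stage I would also record that $\varphi(f)$ and $\psi(g)$ are non-decreasing, being compositions of non-decreasing functions, which is exactly the input Chebyshev requires.

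Next I would multiply the two Jensen bounds and merge the two separate averages into a single average of the product. Applying Chebyshev's integral inequality (Lemma \ref{mi}) on $(0,x)$ with weight $p(t)\equiv 1$ to the pair of non-decreasing functions $\varphi(f),\psi(g)$ yields $\left(\int_0^x\varphi(f)\,dt\right)\!\left(\int_0^x\psi(g)\,dt\right)\le x\int_0^x\varphi(f(t))\psi(g(t))\,dt$, which collapses the two factors of $\frac{1}{x}$ into one. Combining everything produces a pointwise bound of the shape $\varphi(F(x))\psi(G(x))\le \frac{1}{x}\int_0^x\varphi(f(t))\psi(g(t))\,dt$. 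Multiplying by $x^{1-p}$, integrating in $x$, and interchanging the order of integration (Tonelli, all integrands being non-negative, with the integrability hypothesis ensuring finiteness) leaves $\int_0^\infty \varphi(f(t))\psi(g(t))\int_t^\infty x^{-p}\,dx\,dt$, and the inner integral supplies the factor $\frac{1}{p-1}$.

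The step I expect to be the main obstacle is the passage from $\varphi\!\left(\frac{F(x)}{x}\right)$ to $\varphi(F(x))$: submultiplicativity introduces the factor $\varphi(x)\psi(x)$, and to land on the clean constant $\frac{1}{p-1}$ this factor must be absorbed rather than left floating inside the final integral. Controlling it is precisely the purpose of the hypotheses $\varphi(0)=0$, the monotonicity of $\frac{\varphi(x)}{x}$ from Lemma \ref{l3}, and the monotonicity transfer from $\frac{g(x)}{x}$ to $\frac{G(x)}{x^{2}}$ supplied by Lemma \ref{lll}. The remaining care is routine: checking that Chebyshev applies in the correct (non-reversed) sense, and justifying the Fubini--Tonelli interchange together with the convergence of $\int_t^\infty x^{-p}\,dx$, which is where the restriction $p>1$ is genuinely needed.
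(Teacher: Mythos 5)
Your plan tracks the paper's own argument tool for tool (submultiplicativity, then Jensen, then Chebyshev with weight $1$, then Fubini against $\int_t^\infty x^{-p}dx$), but it breaks at exactly the point you flag as ``the main obstacle,'' and the obstacle is not surmountable. What your three steps actually yield pointwise is
\[
\varphi(F(x))\,\psi(G(x))\;\le\;\varphi(x)\,\psi(x)\cdot\frac{1}{x}\int_{0}^{x}\varphi(f(t))\,\psi(g(t))\,dt,
\]
whereas your proof goes on to use this bound \emph{without} the factor $\varphi(x)\psi(x)$. That factor cannot be dropped: take $\varphi=\psi=\mathrm{id}$ (convex, non-decreasing, multiplicative) and $f=g\equiv 1$; then $\varphi(F(x))\psi(G(x))=x^{2}$ while $\frac{1}{x}\int_{0}^{x}\varphi(f(t))\psi(g(t))\,dt=1$, so your claimed pointwise inequality fails for every $x>1$. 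None of the hypotheses you invoke to ``absorb'' the factor can do so: $\varphi(0)=0$ is not a hypothesis of this theorem at all, Lemma \ref{l3} only gives monotonicity of $\varphi(x)/x$ (which makes $\varphi(x)\psi(x)$ grow, not stay bounded by $1$), and Lemma \ref{lll} concerns $G(x)/x^{2}$ and never enters this argument. So the absorption step is a genuine gap, not a routine verification.

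For comparison: the paper never absorbs the factor either. Its chain of inequalities begins and ends with the quantity $\int_{0}^{\infty}\frac{\varphi(F(x))\psi(G(x))}{\varphi(x)\psi(x)}\,x^{1-p}dx$; the factor $\varphi(x)\psi(x)$ stays in the denominator on the left throughout (the two lines where it appears and then vanishes inside the $x$-integral after Fubini are typographical slips), so what the paper's proof actually establishes is the theorem with the left-hand side divided by $\varphi(x)\psi(x)$ --- a different, weaker statement than the one displayed in the theorem. Indeed the literal statement you set out to prove is false: with $\varphi(x)=\psi(x)=x^{2}$, $f=g=\min(x^{2},1)$ and $p=7$, both sides are finite, the right-hand side equals $\frac{1}{6}\bigl(\frac{1}{3}+\frac{1}{5}\bigr)=\frac{4}{45}\approx 0.089$, while the left-hand side exceeds $\int_{1}^{\infty}\bigl(x-\frac{2}{3}\bigr)^{4}x^{-6}dx=\frac{121}{405}\approx 0.299$. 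So your strategy is the right one for the corrected statement (with $\varphi(x)\psi(x)$ kept in the denominator on the left); the missing insight is that this factor must remain there rather than be absorbed, and no choice of lemmas can change that.
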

\begin{proof} Since the functions $\varphi$ and $\psi$ are convex and submultiplicative, we obtain
\begin{center}
\begin{equation*}
\begin{split}
\int_{0}^{\infty}\frac{\varphi(F(x))\psi(G(x))x^{1-p}}{\varphi(x)\psi(x)}dx&=\int_{0}^{\infty}\frac{x^{1-p}}{\varphi(x)\psi(x)}\varphi\left(\int_{0}^{x}f(t)dt\right)\psi\left(\int_{0}^{x}f(t)dt\right)dx\\
&=\int_{0}^{\infty}\frac{x^{1-p}}{\varphi(x)\psi(x)}\varphi\left(\int_{0}^{x}f(t)dt\right)\psi\left(\int_{0}^{x}f(t)dt\right)dx\\
&\leq\int_{0}^{\infty}x^{1-p}\varphi\left(\frac{1}{x}\int_{0}^{x}f(t)dt\right)\psi\left(\frac{1}{x}\int_{0}^{x}g(t)dt\right)dx.
\end{split}
\end{equation*}
\end{center}
So, by Lemma \ref{009} and the previous inequality we get
\begin{center}
\begin{equation}\label{ww}
\int_{0}^{\infty}\frac{\varphi(F(x))\psi(G(x))x^{1-p}}{\varphi(x)\psi(x)}dx\leq\int_{0}^{\infty}x^{-1-p}\left(\int_{0}^{x}\varphi(f(t))dt\right)\left(\int_{0}^{x}\psi(g(t))dt\right)dx.
\end{equation}
\end{center}
Since the functions $\varphi,\;\psi,\;f\;\textrm{and} \;g$ are non-decreasing then the function $\varphi\circ f$ and $\psi\circ g$ are also non-decreasing and consequently we can applied  Lemma \ref{mi} where $p(x)=1,$ and by the inequality (\ref{ww}) we have 
\begin{equation}
\begin{split}
\int_{0}^{\infty}\frac{\varphi(F(x))\psi(G(x))x^{1-p}}{\varphi(x)\psi(x)}dx&\leq\int_{0}^{\infty}x^{-p}\left(\int_{0}^{x}\varphi(f(t))\psi(g(t))dt\right)dx\\
&=\int_{0}^{\infty}\varphi(f(t))\psi(g(t))\left(\int_{t}^{\infty}x^{-p}\varphi(x)\psi(x)dx\right)dt\\
&=\int_{0}^{\infty}\varphi(f(t))\psi(g(t))\left(\int_{t}^{\infty}x^{-p}dx\right)dt\\
&=\frac{1}{p-1}\int_{0}^{\infty}x^{1-p}\varphi(f(x)\psi(g(x))dx.
\end{split}
\end{equation}
\end{proof}
\begin{theorem}
Let $f\geq0,$ non-decreasing, $F$ is as defined by
(\ref{1}). Let $g>0,$ be a  continuous on $(0,\infty)$, $G$ is as defined
by (\ref{001}). Let $\phi\geq0,$ and non-decreasing, and $0<b\leq\infty.$ If $g$ is non-increasing,and $\phi\left(\frac{f(x)}{g(x)}\right)$ is integrable on $0<b\leq\infty,$  the following inequality is valid
\begin{equation}
\int_{0}^{b}\phi\left(\frac{F(x)}{G(x)}\right)dx\leq\int_{0}^{b}\phi\left(\frac{f(x)}{g(x)}\right)dx.
\end{equation}
In particular, by putting  $\phi(x)=x^{p},\, p\geq1,$
we obtain 
\begin{equation}\label{K01}
\int_{0}^{b}\left(\frac{F(x)}{G(x)}\right)^{p}dx\leq\int_{0}^{b}\left(\frac{f(x)}{g(x)}\right)^{p}dx.
\end{equation}
\end{theorem}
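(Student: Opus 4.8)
The plan is to reduce the whole statement to a single pointwise inequality, namely
\[
\frac{F(x)}{G(x)} \le \frac{f(x)}{g(x)} \qquad (x>0),
\]
after which the monotonicity of $\phi$ and integration over $[0,b]$ finish the argument at once. First I would record that under the hypotheses the ratio $t\mapsto \frac{f(t)}{g(t)}$ is non-decreasing on $(0,\infty)$: since $f\ge 0$ is non-decreasing and $g>0$ is non-increasing, the factor $1/g$ is non-decreasing, and a product of two non-negative non-decreasing factors is again non-decreasing.

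The key step is to realize $\frac{F(x)}{G(x)}$ as a $g$-weighted average of $\frac{f}{g}$ on $[0,x]$. Writing $f(t)=\frac{f(t)}{g(t)}\,g(t)$ gives
\[
\frac{F(x)}{G(x)} = \frac{\int_{0}^{x} \frac{f(t)}{g(t)}\,g(t)\,dt}{\int_{0}^{x} g(t)\,dt},
\]
which is a weighted average, with the positive weight $g$, of the values of the non-decreasing function $\frac{f}{g}$ over $[0,x]$. Because $\frac{f(t)}{g(t)}\le \frac{f(x)}{g(x)}$ throughout $[0,x]$, this average is bounded above by its endpoint value, which yields $\frac{F(x)}{G(x)}\le \frac{f(x)}{g(x)}$. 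Equivalently, one may verify the same bound directly by checking $f(x)G(x)-g(x)F(x)=\int_{0}^{x}\bigl[f(x)g(t)-g(x)f(t)\bigr]\,dt\ge 0$, the integrand being non-negative since $f(x)\ge f(t)$ and $g(t)\ge g(x)$ for $0\le t\le x$.

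With the pointwise bound secured, the remainder is routine. Since $\phi$ is non-decreasing, applying it to both sides gives $\phi\!\left(\frac{F(x)}{G(x)}\right)\le \phi\!\left(\frac{f(x)}{g(x)}\right)$ for every $x>0$, and integrating over $[0,b]$ produces the claimed inequality; the assumption that $\phi\!\left(\frac{f}{g}\right)$ is integrable guarantees the right-hand integral is finite, whence so is the left. The special case $\phi(x)=x^{p}$ with $p\ge 1$, which is non-negative and non-decreasing on the relevant range, then immediately yields (\ref{K01}). Note that only the monotonicity of $\phi$ is used, not convexity, so Jensen's inequality is not needed here.

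I expect the main obstacle to be the pointwise inequality $\frac{F(x)}{G(x)}\le\frac{f(x)}{g(x)}$; everything hinges on correctly exploiting that $f$ increases while $g$ decreases, so that $\frac{f}{g}$ is monotone and its $g$-weighted average over $[0,x]$ cannot exceed its endpoint value. The continuity of $g$ serves only to make $G$ well behaved and the averaging rigorous, and the integrability hypothesis serves only to keep the integrals finite, so no deeper input is required.
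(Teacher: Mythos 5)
Your proof is correct and follows essentially the same route as the paper: both arguments reduce the theorem to the pointwise bound $\frac{F(x)}{G(x)}\le\frac{f(x)}{g(x)}$ and then apply the non-decreasing $\phi$ and integrate. The paper obtains that bound by the two-step comparison $F(x)\le xf(x)$ (from $f$ non-decreasing) and $G(x)\ge xg(x)$ (from $g$ non-increasing), while you obtain it in one step by viewing $\frac{F(x)}{G(x)}$ as a $g$-weighted average of the non-decreasing ratio $\frac{f}{g}$ --- a cosmetic difference only.
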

\begin{proof}
Since $g$ is non-increasing, $f$ and $\phi$ are non-decreasing, we get
\begin{equation*}
\begin{split}
\int_{0}^{b}\phi\left(\frac{F(x)}{G(x)}\right)dx&=\int_{0}^{b}\phi\left(\frac{1}{G(x)}\int_{0}^{x}f(t)dt\right)\\
&\leq\int_{0}^{b}\phi\left(\frac{xf(x)}{G(x)}\right)dx\\
&=\int_{0}^{b}\phi\left(\frac{xf(x)}{\int_{0}^{x}g(t)dt}\right)dx\\
&\leq\int_{0}^{b}\phi\left(\frac{xf(x)}{xg(x)}\right)dx.
\end{split}
\end{equation*}
\end{proof}
The next theorem is a generalization of Theorem 2.5 in \cite{ws}.
\begin{theorem}\label{tt2} Let $\varphi(x)\geq0$  be a twice differentiable function on $(0,\infty)$, convex, submultiplicative and $\varphi(0)=0.$ Let $q\in\mathbb{N}$ and $p>1.$ If $x^{2-p}\frac{\varphi(f(x))}{\varphi(x)}$ is integrable , then the following inequality 
\begin{equation}\label{MM}
\int_{0}^{\infty}x^{2-p}\frac{\varphi\left(x^{q}F(x)\right)}{\varphi^{q+2}(x)}dx\leq \frac{1}{p-1}\int_{0}^{\infty}x^{2-p}\frac{\varphi(f(x))}{\varphi(x)}dx.
\end{equation}
holds. 
\end{theorem}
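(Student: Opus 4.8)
The plan is to mirror the peeling-and-integrate strategy of Theorem~\ref{Khaled}, but now with a single multiplier $\varphi$ whose convexity is exploited through Lemma~\ref{l3}. First I would dismantle the argument $x^{q}F(x)$ inside $\varphi$ by submultiplicativity. Writing $x^{q}F(x)=x^{q}\cdot x\cdot\frac{F(x)}{x}$ and recalling that $\varphi(x^{q})\leq\varphi^{q}(x)$ for $q\in\N$, one obtains
$$\varphi\!\left(x^{q}F(x)\right)\leq\varphi(x^{q})\,\varphi(x)\,\varphi\!\left(\frac{1}{x}\int_{0}^{x}f(t)\,dt\right)\leq\varphi^{q+1}(x)\,\varphi\!\left(\frac{1}{x}\int_{0}^{x}f(t)\,dt\right).$$
Since $\frac{dt}{x}$ is a probability measure on $(0,x)$ and $\varphi$ is convex, Jensen's inequality (Lemma~\ref{009}) bounds the last factor by $\frac{1}{x}\int_{0}^{x}\varphi(f(t))\,dt$. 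Dividing by $\varphi^{q+2}(x)$ and inserting the weight $x^{2-p}$ collapses the left-hand integrand to
$$x^{2-p}\frac{\varphi(x^{q}F(x))}{\varphi^{q+2}(x)}\leq\frac{x^{1-p}}{\varphi(x)}\int_{0}^{x}\varphi(f(t))\,dt,$$
which is the crucial reduction that eliminates the power $q$ altogether.

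Next I would integrate over $(0,\infty)$ and swap the order of integration by Fubini's theorem, turning the right-hand side into $\int_{0}^{\infty}\varphi(f(t))\left(\int_{t}^{\infty}x^{1-p}\varphi(x)^{-1}\,dx\right)dt$. The decisive step, and the main obstacle, is estimating the inner integral $\int_{t}^{\infty}x^{1-p}\varphi(x)^{-1}\,dx$: the clean evaluation $\int_{t}^{\infty}x^{-p}\,dx=\frac{t^{1-p}}{p-1}$ becomes available only once the factor $\varphi(x)^{-1}$ has been brought under control. This is exactly where convexity enters. Since $\varphi$ is twice differentiable and convex, $\varphi'$ is non-decreasing, so Lemma~\ref{l3} yields that $\frac{\varphi(x)}{x}$ is non-decreasing. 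Consequently, for $x\geq t$ one has $\frac{1}{\varphi(x)}\leq\frac{t}{x\,\varphi(t)}$, whence
$$\int_{t}^{\infty}\frac{x^{1-p}}{\varphi(x)}\,dx\leq\frac{t}{\varphi(t)}\int_{t}^{\infty}x^{-p}\,dx=\frac{t^{2-p}}{(p-1)\varphi(t)}.$$

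Finally, substituting this estimate back gives $\int_{0}^{\infty}\varphi(f(t))\,\frac{t^{2-p}}{(p-1)\varphi(t)}\,dt=\frac{1}{p-1}\int_{0}^{\infty}t^{2-p}\frac{\varphi(f(t))}{\varphi(t)}\,dt$, which is precisely the claimed inequality~(\ref{MM}). The integrability hypothesis on $x^{2-p}\varphi(f(x))/\varphi(x)$ guarantees both that the Fubini interchange is legitimate and that the right-hand side is finite. Note that, unlike Theorem~\ref{Khaled}, no appeal to the Chebyshev inequality (Lemma~\ref{mi}) is required here, because only a single copy of $\varphi$ occurs; the whole proof ultimately rests on the convexity-driven monotonicity of $\varphi(x)/x$ furnished by Lemma~\ref{l3}.
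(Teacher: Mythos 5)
Your proposal is correct and follows essentially the same route as the paper's own proof: submultiplicativity to peel off $\varphi^{q+1}(x)$ (your three-factor split $x^{q}\cdot x\cdot\frac{F(x)}{x}$ versus the paper's $x^{q+1}\cdot\frac{F(x)}{x}$ is an immaterial re-bracketing), then Jensen's inequality, Fubini, and finally the monotonicity of $\frac{\varphi(x)}{x}$ from Lemma~\ref{l3} to reduce the inner integral to $\int_{t}^{\infty}x^{-p}\,dx=\frac{t^{1-p}}{p-1}$. Your added remarks on why Fubini is legitimate and why Chebyshev's inequality is not needed are accurate but do not change the argument.
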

\begin{proof} Since the function $\varphi(x)$ is convex, we have $\frac{\varphi(x)}{x}$ is non-decreasing by Lemma \ref{l3}. Then, by the Jensen's inequality,  Fubini theorem and since $\varphi$ is submultiplicative (In particular $\varphi(x^n)\leq \varphi^n(x)\;n\geq1 $),  we get
 \begin{equation*}\begin{split}
\int_{0}^{\infty}x^{2-p}\frac{\varphi\left(x^{q}F(x)\right)}{\varphi^{q+2}(x)}dx&=\int_{0}^{\infty}\frac{x^{2-p}}{\varphi^{q+2}(x)}\varphi\left(x^{q+1}\frac{F(x)}{x}\right)dx\\
&\leq\int_{0}^{\infty}\frac{x^{2-p}}{\varphi^{q+2}(x)}\varphi(x^{q+1})\varphi\left(\frac{F(x)}{x}\right)dx\\
&\leq\int_{0}^{\infty}\frac{x^{2-p}}{\varphi^{q+2}(x)}\varphi^{q+1}(x)\varphi\left(\frac{F(x)}{x}\right)dx\\
&=\int_{0}^{\infty}\frac{x^{2-p}}{\varphi(x)}\varphi\left(\frac{F(x)}{x}\right)dx\\
&=\int_{0}^{\infty}\frac{x^{2-p}}{\varphi(x)}\varphi\left(\frac{1}{x}\int_{0}^{x}f(t)dt\right)dx\\
&\leq\int_{0}^{\infty}\frac{x^{2-p}}{\varphi(x)}\left(\frac{1}{x}\int_{0}^{x}\varphi(f(t))dt\right)dx\\
&=\int_{0}^{\infty}\varphi(f(t))\left(\int_{t}^{\infty}x^{-p}\frac{x}{\varphi(x)}dx\right)dt\\
&\leq\int_{0}^{\infty}\varphi(f(t))\frac{t}{\varphi(t)}\left(\int_{t}^{\infty}x^{-p}dx\right)dt\\
&=\frac{1}{p-1}\int_{0}^{\infty}x^{2-p}\frac{\varphi(f(x))}{\varphi(x)}dx.
\end{split}
\end{equation*}
So, the proof of Theorem \ref{tt2} is complete. 
\end{proof}
\begin{theorem}
Let $f\geq0,$ non-decreasing  on $(0,\infty)$ and $F$ is as defined by (\ref{1}). Let $g>0,$ be a  non-decreasing  on $(0,\infty)$ and $G$ is as defined
by (\ref{001}). Let $\phi\geq0,$ and non-decreasing, and $0<a<b<\infty.$ If the function $\phi(f(x)g(x))$ is integrable on $[a,b]$, then 
\begin{equation}\label{K1000}
\int_{a}^{b}\phi\left(\frac{F(x)G(x)}{x^{2}}\right)dx\leq\int_{a}^{b}\phi\left(f(x)g(x)\right)dx.
\end{equation}
In particular, by putting $\phi(x)=x^{p},\;p\geq1$ and $g(x)=1$, we obtain
\begin{equation*}
\int_{a}^{b}\left(\frac{F(x)}{x}\right)^{p}dx\leq\int_{a}^{b}f^{p}(x)dx.
\end{equation*}
\end{theorem}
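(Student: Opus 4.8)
The plan is to bypass both Jensen's and Chebyshev's inequalities entirely and instead establish a clean \emph{pointwise} estimate that is then transported through $\phi$ and integrated. The whole theorem rests on the elementary observation that averaging a monotone function cannot exceed its value at the endpoint.

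First I would exploit the monotonicity of $f$. Since $f$ is non-decreasing on $(0,\infty)$, for each fixed $x>0$ we have $f(t)\leq f(x)$ for all $t\in(0,x)$; integrating this bound over $(0,x)$ gives
\[
F(x)=\int_{0}^{x}f(t)\,dt\leq\int_{0}^{x}f(x)\,dt=xf(x),
\]
so that $\frac{F(x)}{x}\leq f(x)$. The identical argument applied to the non-decreasing function $g$ yields $\frac{G(x)}{x}\leq g(x)$. Both quotients $\frac{F(x)}{x}$ and $\frac{G(x)}{x}$ are non-negative because $f,g\geq0$, which legitimizes multiplying the two inequalities term by term.

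Next I would form the product of these two bounds to obtain, for every $x>0$,
\[
\frac{F(x)G(x)}{x^{2}}=\frac{F(x)}{x}\cdot\frac{G(x)}{x}\leq f(x)\,g(x).
\]
Since $\phi$ is non-decreasing, applying it to both sides preserves the inequality, giving $\phi\!\left(\frac{F(x)G(x)}{x^{2}}\right)\leq\phi\big(f(x)g(x)\big)$ pointwise on $[a,b]$. Integrating over $[a,b]$, where the right-hand side is integrable by hypothesis, produces \eqref{K1000} at once.

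For the stated special case I would simply substitute $\phi(x)=x^{p}$ with $p\geq1$ and $g\equiv1$, whence $G(x)=\int_{0}^{x}1\,dt=x$, so $\frac{F(x)G(x)}{x^{2}}=\frac{F(x)}{x}$ and $f(x)g(x)=f(x)$; the inequality reduces to $\int_{a}^{b}\left(\frac{F(x)}{x}\right)^{p}dx\leq\int_{a}^{b}f^{p}(x)\,dx$. I expect no genuine obstacle here: unlike the earlier results there is no need for Lemma \ref{lll} or Lemma \ref{009}, and the only point requiring care is checking that the non-negativity of $f$ and $g$ justifies multiplying the two quotient bounds, together with confirming that the monotonicity of $\phi$ is the sole property of $\phi$ actually used.
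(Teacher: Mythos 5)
Your proof is correct, but it takes a genuinely different and more elementary route than the paper's. The paper first invokes the Chebyshev integral inequality (Lemma \ref{mi}) with weight $p\equiv1$ to bound $F(x)G(x)=\left(\int_0^x f(t)\,dt\right)\left(\int_0^x g(t)\,dt\right)\leq x\int_0^x f(t)g(t)\,dt$, and only afterwards peels off the monotonicity estimates $\int_0^x f(t)g(t)\,dt\leq f(x)\int_0^x g(t)\,dt\leq xf(x)g(x)$, applying the non-decreasing $\phi$ at each stage of the chain. You bypass Chebyshev entirely: the two endpoint bounds $F(x)\leq xf(x)$ and $G(x)\leq xg(x)$, multiplied together (legitimately, since all four quantities are non-negative), already give the pointwise inequality $\frac{F(x)G(x)}{x^{2}}\leq f(x)g(x)$, after which monotonicity of $\phi$ and integration over $[a,b]$ finish the matter. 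What your route buys is economy and transparency: one of the paper's two advertised tools turns out to be unnecessary for this theorem, and the single pointwise inequality makes the logical structure plain. What the paper's route buys is a sharper intermediate statement: its chain in fact proves the refinement $\int_a^b\phi\left(\frac{F(x)G(x)}{x^{2}}\right)dx\leq\int_a^b\phi\left(\frac{1}{x}\int_0^x f(t)g(t)\,dt\right)dx\leq\int_a^b\phi\left(f(x)g(x)\right)dx$, since the Chebyshev bound $x\int_0^x f(t)g(t)\,dt$ sits below your cruder bound $x^{2}f(x)g(x)$; that middle term is invisible in your argument. Both proofs use only the monotonicity of $\phi$ (no convexity is needed, as you note), and both handle the special case $\phi(x)=x^{p}$, $g\equiv1$ identically.
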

\begin{proof}
By the Chebyshev integral inequality  and  the assumption of the functions $f,g$ and $\phi$, and we consider the function $p(x)=1$ for all $x\in[a,b]$ we obtain
\begin{equation*}
\begin{split}
\int_{a}^{b}\phi\left(\frac{F(x)G(x)}{x^{2}}\right)dx&=\int_{a}^{b}\phi\left[\frac{1}{x^{2}}\left(\int_{0}^{x}f(t)dt\right)\left(\int_{0}^{x}g(t)dt\right)\right]\\
&\leq\int_{a}^{b}\phi\left[\frac{1}{x^{2}}\int_{0}^{x}1 dt\int_{0}^{x}f(t)g(t)dt\right]dx\\
&=\int_{a}^{b}\phi\left[\frac{1}{x}\int_{0}^{x}f(t)g(t)dt\right]dx\\
&\leq\int_{a}^{b}\phi\left[\frac{f(x)}{x}\int_{0}^{x}g(t)dt\right]dx\\
&\leq\int_{a}^{b}\phi\left[\frac{f(x)g(x)}{x}\int_{0}^{x}dt\right]dx\\
&=\int_{a}^{b}\phi\left(f(x)g(x)\right)dx.
\end{split}
\end{equation*}
\end{proof}
The following result concerns the converse inequality (\ref{K1000}).
\begin{theorem}
Let $f\geq0,$ non-decreasing, $F$ is as defined by (\ref{1}).Let $g>0,$  non-decreasing, $G$ is as defined
by (\ref{001}). Let $\phi\geq0,$ and non-increasing, and  $0<a<b<\infty,$  then
\begin{equation}\label{K1}
\int_{a}^{b}\phi\left(\frac{F(x)G(x)}{x^{2}}\right)dx\geq\int_{a}^{b}\phi\left(f(x)g(x)\right)dx.
\end{equation}
\end{theorem}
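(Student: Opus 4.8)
The plan is to re-use the pointwise chain of inequalities that was already established in the proof of the preceding theorem, observing that that chain never exploited the monotonicity \emph{direction} of $\phi$. The only structural change here is that, since $\phi$ is now non-increasing rather than non-decreasing, each application of $\phi$ reverses the inequality, which is exactly what turns the $\leq$ of the previous result into the $\geq$ asserted now.

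First I would record the purely pointwise bound
$\frac{F(x)G(x)}{x^{2}}\leq f(x)g(x)$, valid for every $x\in[a,b]$, whose proof involves no reference to $\phi$ at all. Applying Lemma \ref{mi} with $p(t)\equiv 1$ to the increasing functions $f$ and $g$ gives $\left(\int_{0}^{x}f\right)\left(\int_{0}^{x}g\right)\leq x\int_{0}^{x}f(t)g(t)\,dt$, so that $\frac{F(x)G(x)}{x^{2}}\leq\frac{1}{x}\int_{0}^{x}f(t)g(t)\,dt$. Next, since $f$ is non-decreasing we have $f(t)\leq f(x)$ for $t\leq x$, whence $\int_{0}^{x}f(t)g(t)\,dt\leq f(x)\int_{0}^{x}g(t)\,dt=f(x)G(x)$. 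Finally, since $g$ is non-decreasing, $G(x)=\int_{0}^{x}g(t)\,dt\leq x\,g(x)$. Chaining these three estimates yields $\frac{F(x)G(x)}{x^{2}}\leq\frac{f(x)G(x)}{x}\leq f(x)g(x)$.

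Second, because $\phi\geq 0$ is non-increasing, this pointwise bound is \emph{reversed} when composed with $\phi$: for each $x\in[a,b]$ we obtain $\phi\!\left(\frac{F(x)G(x)}{x^{2}}\right)\geq\phi\!\left(f(x)g(x)\right)$. Integrating over $[a,b]$ then gives the claimed inequality (\ref{K1}), the integrability assumption on $\phi(f(x)g(x))$ ensuring that both sides are well defined.

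I do not expect any genuine obstacle, since the argument is the mirror image of the previous theorem. The single point demanding care is the bookkeeping of signs: the composite bound $\frac{F(x)G(x)}{x^{2}}\leq f(x)g(x)$ must be assembled from the Chebyshev step and the two monotonicity steps \emph{before} $\phi$ is applied, precisely so that the non-increasing $\phi$ flips the final comparison in the desired direction.
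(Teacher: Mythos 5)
Your proposal is correct and follows essentially the same route as the paper: the same three ingredients (Chebyshev's inequality with $p\equiv 1$ applied to the non-decreasing $f$ and $g$, then the bounds $\int_{0}^{x}f(t)g(t)\,dt\leq f(x)G(x)$ and $G(x)\leq x\,g(x)$), with the non-increasing $\phi$ reversing the direction. The only difference is organizational --- you assemble the pointwise bound $\frac{F(x)G(x)}{x^{2}}\leq f(x)g(x)$ first and apply $\phi$ once at the end, whereas the paper applies $\phi$ at every intermediate step; this is a slightly cleaner bookkeeping of the same argument.
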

\begin{proof} By the Chebyshev integral inequality  and  the assumption of the functions $f,g$ and $\phi$, and we consider the function $p(x)=1$ for all $x\in[a,b]$ we obtain
\begin{equation*}
\begin{split}
\int_{a}^{b}\phi\left(\frac{F(x)G(x)}{x^{2}}\right)dx&=\int_{a}^{b}\phi\left[\frac{1}{x^{2}}\left(\int_{0}^{x}f(t)dt\right)\left(\int_{0}^{x}g(t)dt\right)\right]\\
&\geq\int_{a}^{b}\phi\left[\frac{1}{x^{2}}\int_{0}^{x}1\;dt\int_{0}^{x}f(t)g(t)dt\right]dx\\
&=\int_{a}^{b}\phi\left[\frac{1}{x}\int_{0}^{x}f(t)g(t)dt\right]dx\\
&\geq\int_{a}^{b}\phi\left[\frac{f(x)}{x}\int_{0}^{x}g(t)dt\right]dx\\
&\geq\int_{a}^{b}\phi\left[\frac{f(x)g(x)}{x}\int_{0}^{x}dt\right]dx\\
&=\int_{a}^{b}\phi\left(f(x)g(x)\right)dx.
\end{split}
\end{equation*}
\end{proof}

\vspace{2mm} \noindent \footnotesize
\vspace{2mm} \noindent \footnotesize
\begin{minipage}[b]{10cm}
Khaled Mehrez \\
Departement of Mathematics, University of Kairaouan,\\
ISSAT. Kasserine 1200, Tunisia.\\
Email: k.mehrez@yahoo.fr
\end{minipage}
\end{document}